\documentclass[11pt]{amsart}
\usepackage{amsthm}
\usepackage{amscd}  
\usepackage{graphicx}
\usepackage{amssymb}
\usepackage{epstopdf}

  
    
\theoremstyle{plain}
\newtheorem{Thm}{Theorem}[section]

\newtheorem{Prop}[Thm]{Proposition}
\newtheorem{Cor}[Thm]{Corollary}
\newtheorem{Lem}[Thm]{Lemma}

\theoremstyle{definition}
\newtheorem{Defn}[Thm]{Definition}

\numberwithin{equation}{section}

\title{Non-commutative Grassmann variety as a moduli space}
\author{Yujiro Kawamata}
\date{}                                           
%

\begin{document}
\maketitle

\begin{abstract}
We construct a non-commutative version of the Grassmann variety $G(2,4)$
as a non-commutative moduli space of linear subspaces in a projective space.

14A22, 14M15.
\end{abstract}

\section{Introduction}

We will construct a non-commutative (NC) version of the Grassmann variety $G(m,n)$, denoted by $NCG(m,n)$, 
as an NC moduli space of linear subspaces in a projective space in the case $m = 2$ and $n = 4$.
$NCG(m,n)$ is expected to have the following properties:

(0) $NCG(m,n)$ is an NC scheme.
An NC scheme is defined by gluing associative algebras.
A usual scheme has a structure sheaf, but an NC scheme is a presheaf, i.e., 
a functor from a poset to the category of associative algebras, 
because there are no localizations of associative algebras in general.

(1) The set of closed points of $NCG(m,n)$ is the same as $G(m,n)$.
The set of closed points of an NC scheme is usually small, because a closed point is defined as a homomorphism to
a commutative algebra and the information on the non-commutativity is lost.  

(2) The completion of $NCG(m,n)$ at each closed point parametrizes a semi-universal formal NC deformation 
of the structure sheaf $\mathcal O_L$ for the corresponding linear subspace $L \subset \mathbf P^{n-1}$.

(3) There is a universal NC deformation family of linear subspaces of $\mathbf P^{n-1}$ parametrized by $NCG(m,n)$.

We note that $NCG(m,n)$ is {\em not} an NC deformation of $G(m,n)$ as a variety.
For example, it may not be locally Noetherian.
This is because of the fact that there are much more NC deformations than the usual deformations of the linear subspaces.

$NCG(m,n)$ is a kind of global moduli space of a sheaf on a fixed projective space $\mathbf P^{n-1}$.
There are different kinds of variables of the associative algebras which are either commutative, 
semi-commutative or totally non-commutative.
For example, $NCG(1,n) = \mathbf P^{n-1}$ has only commutative variables, 
while $NCG(n - 1,n)$ is totally NC and not noetherian.
If $1 < m < n-1$, then the variables are mixed.
There are commutative pairs of variables as well as commutative quartets.
The point of the construction of the NC scheme is that the commutator relations are compatible with the change of variables
between local charts.

The following is the main result:

\begin{Thm}
Assume that $m = 2$ and $n = 4$.
Then there exists an NC scheme $NCG(2,4)$ which has the above properties (1), (2) and (3).
\end{Thm}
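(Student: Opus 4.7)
The strategy is to proceed chart by chart over the classical variety $G(2,4)$ and then to glue. Recall that $G(2,4)$ is covered by the six standard affine opens $U_I\cong\mathbf{A}^4$ indexed by pairs $I=\{i<j\}\subset\{1,2,3,4\}$, each parametrizing lines $L\subset\mathbf{P}^3$ whose representative $2\times 4$ matrix has pivots in columns $i,j$. On each $U_I$, I would produce an associative algebra $A_I$ on four generators (the four free matrix entries) whose formal completion at the maximal ideal of the origin is a semi-universal formal NC deformation of $\mathcal{O}_L$ for the corresponding line.

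The first step is the local NC deformation theory of $\mathcal{O}_L$ for $L\cong\mathbf{P}^1\subset\mathbf{P}^3$. Since $N_{L/\mathbf{P}^3}\cong\mathcal{O}(1)^{\oplus 2}$, one has $\mathrm{Ext}^1(\mathcal{O}_L,\mathcal{O}_L)\cong H^0(N)\cong\mathbf{C}^4$ and $\mathrm{Ext}^2=H^1(\Lambda^2 N)=0$, so classical obstructions vanish. The commutator relations among the four NC parameters should then come from the higher $A_\infty$-products on $\mathrm{RHom}(\mathcal{O}_L,\mathcal{O}_L)$, which I would compute using a Koszul resolution. The expected outcome is that some pairs of generators commute while others satisfy explicit nontrivial commutator relations, consistent with the remark about commutative pairs of variables as well as commutative quartets in the introduction. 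This yields an explicit presentation of each $A_I$.

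The second step is gluing the $A_I$ into an NC scheme. Because associative algebras lack general two-sided localizations, the NC structure is taken as a presheaf on a poset. For each ordered pair $(I,J)$, I would identify the classical open intersection $U_I\cap U_J$, invert the $2\times 2$ minor appearing in the classical transition, and construct an associative algebra $A_{IJ}$ together with algebra maps $A_I\to A_{IJ}\leftarrow A_J$ whose restriction to the commutative quotient recovers the classical transition, with the commutation relations on both sides mapping consistently. After choosing such transition algebras (and higher ones $A_{IJK}$ for triple overlaps), the cocycle/poset compatibility has to be verified to define the presheaf $NCG(2,4)$.

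Finally, the three properties are to be checked. Property (1) holds because setting all commutators to zero in each $A_I$ recovers the commutative coordinate ring of $U_I$ and the classical gluing of these produces $G(2,4)$; closed points of the NC scheme are homomorphisms to $\mathbf{C}$ and hence factor through this commutative quotient. Property (2) is built into the construction at the origin of each chart and extends to arbitrary closed points by a translation argument within a chart. Property (3) follows by assembling, on each chart, an NC universal line as a module over $A_I$ tensored with a suitable NC version of the homogeneous coordinate ring of $\mathbf{P}^3$, and checking compatibility across the $A_{IJ}$. The main obstacle will be the gluing step: lifting the classical transition, which requires inverting a $2\times 2$ minor in a non-commutative setting, and matching commutation relations so that both algebra maps $A_I\to A_{IJ}\leftarrow A_J$ agree, together with the cocycle check on triple overlaps, requires a careful algebraic analysis and is where the bulk of the technical work is expected to lie.
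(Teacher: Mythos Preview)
Your plan follows the same overall architecture as the paper: explicit associative algebras on the six standard charts, transition algebras on overlaps, verification that the abelianization recovers $G(2,4)$ and that completions give the semi-universal NC deformation, and gluing of the universal modules. Two points where the paper proceeds differently are worth noting.

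First, the paper does not derive the local commutator relations from an $A_\infty$ computation on $\mathrm{RHom}(\mathcal{O}_L,\mathcal{O}_L)$; it imports the presentation of $R_\lambda$ (the commutative pairs $[a_{i,j_1},a_{i,j_2}]=0$ and the quartet relation $[a_{i_1,j_1},a_{i_2,j_2}]+[a_{i_2,j_1},a_{i_1,j_2}]=0$) directly from the cited reference on NC base deformations. Your route would work in principle but is not what is done here.

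Second, and more substantively, your description of the overlap algebras as ``invert the $2\times 2$ minor'' glosses over the main technical content. The paper distinguishes two types of overlaps according to the octahedral combinatorics of $\Lambda^{\max}$: for \emph{adjacent} pairs $|\lambda\cap\lambda'|=1$ one inverts a single generator (e.g.\ $a_{1,3}$), writes down the classical transition formulas verbatim, and checks directly that the ideal $I'$ is generated from $I$ under these substitutions. For \emph{opposite} pairs $|\lambda\cap\lambda'|=0$ the transition requires inverting the noncommutative determinant $d=a_{1,3}a_{2,4}-a_{1,4}a_{2,3}$, and here the paper does not guess the formulas but obtains them by \emph{composing two adjacent transitions} through an intermediate chart. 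The crucial computation is the identity $dd''=d''d=1$ (with $d''$ the determinant on the opposite chart), which is not automatic in the NC setting since $d$ is not central; once this is established, the Cramer-type formulas $a''_{3,1}=d^{-1}a_{2,4}$, etc., follow. Triple overlaps then reduce to combinations of these two cases. This composition trick, and the explicit verification that the commutator ideals match under each transition, is where the actual work lies, and your plan would benefit from anticipating that the opposite-overlap case is handled via the adjacent case rather than independently.
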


We recall the definition of NC schemes in \S 2 and the construction of a local model for $NCG(m,n)$ in \S 3.
The theorem is proved in \S 4.

We work over an algebraically closed base field $k$ throughout the paper.

The author would like to thank NCTS of National Taiwan University where 
this work was partly done while the author visited there.
This work is partly supported by JSPS Kakenhi 21H00970.

\section{NC scheme}

We recall a definition of an NC scheme (\cite{DLL}, \cite{smooth}).
It is a presheaf of associative algebras in the following sense.
Let $\Lambda$ be a poset which has a minimum for arbitrary pair of elements.
$\Lambda$ is considered to be a category such that $\text{Hom}(\lambda_1, \lambda_2)$ is a set with one element 
if $\lambda_1 \ge \lambda_2$ and empty otherwise.
An {\em NC scheme} is a functor $F: \Lambda \to \text{Alg}$ from $\Lambda$ to the category of unital associative algebras.
$\lambda \in \Lambda$ is considered to be the index of an imaginary affine open subset $U_{\lambda}$,  
and $F(\lambda)$ the corresponding coordinate ring.
$\lambda_1 > \lambda_2$ corresponds to the inclusion $U_{\lambda_1} \supset U_{\lambda_2}$, 
and the ring homomorphism $\phi_{\lambda_2,\lambda_1}: F(\lambda_1) \to F(\lambda_2)$ corresponds to 
the restriction of functions.
If $\lambda_1 > \lambda_2 > \lambda_3$, then we have $\phi_{\lambda_3,\lambda_2}\phi_{\lambda_2,\lambda_1} 
= \phi_{\lambda_3,\lambda_1}$ by the axiom of functors.

\section{local model of NC Grassmann}

Let $\mathbf P^{n-1}$ be a projective space with homogeneous coordinates $x_1,\dots,x_n$.
The usual Grassmann variety $G(m,n)$ parametrizes $(m-1)$-dimensional linear subspaces in $\mathbf P^{n-1}$.
It is covered by affine open subsets $U_{\lambda}$ where $\lambda \subset \{1,\dots,n\}$ is a
subset of order $m$, which parametrize
linear subspaces whose equations are of the form $x_j = \sum_{i \in \lambda} a_{i, j}x_i$ for $j \not\in \lambda$ and
$a_{i, j} \in k$.
$U_{\lambda} \cong k^{m(n-m)}$ with coordinates $a_{i,j}$ for $i \in \lambda$ and $j \not\in \lambda$.

The index set $\Lambda$ for the NC Grassmann variety $NCG(m,n)$ is defined as follows.
A maximal element $\lambda = \{i_1,\dots, i_m\} \in \Lambda$ is a subset of order $m$ of the set $\{1,\dots, n\}$.
There are $\binom nm$ maximal elements.
The index set $\Lambda$ consists of maximal elements and their minima 
$\min \{\lambda_1, \dots, \lambda_r\}$ for maximal $\lambda_1, \dots, \lambda_r$ which correspond 
to the intersections $\bigcap_{i=1}^r U_{\lambda_i}$ of affine open subsets on $G(m,n)$.
We note that $\min \{\lambda_1, \dots, \lambda_r\}$ is {\em not} the intersection of subsets of $\{1,\dots, n\}$
denoted by $\bigcap_{i=1}^r \lambda_i$.

The local model $R_{\lambda}$ of $NCG(m,n)$ corresponding to $U_{\lambda}$ is already described in
\cite{NCbase}.
$R_{\lambda}$ is an associative algebra whose abelianization 
$R_{\lambda}^{ab} = R_{\lambda}/[R_{\lambda},R_{\lambda}]$ is isomorphic to the coordinate ring of 
$U_{\lambda}$.

\begin{Defn}
\[
R_{\lambda} = k\langle a_{i,j}^{\lambda} \mid i \in \lambda, j \not\in \lambda \rangle/I_{\lambda}
\]
where the $a_{i,j}$ are independent variables and 
$I_{\lambda}$ is a two-sided ideal generated by the following commutativity relations:
\[
\begin{split}
&a_{i,j_1}^{\lambda}a_{i,j_2}^{\lambda} - a_{i,j_2}^{\lambda}a_{i,j_1}^{\lambda}, \\
&a_{i_1,j_1}^{\lambda}a_{i_2,j_2}^{\lambda} - a_{i_2,j_2}^{\lambda}a_{i_1,j_1}^{\lambda} 
- a_{i_1,j_2}^{\lambda}a_{i_2,j_1}^{\lambda} + a_{i_2,j_1}^{\lambda}a_{i_1,j_2}^{\lambda}
\end{split}
\]
for all $i, i_1,i_2 \in \lambda$ and $j,j_1,j_2 \not\in \lambda$.
We define a left $R_{\lambda}$-module $F_{\lambda}$, called a {\em universal module}, 
as a quotient of $R_{\lambda} \otimes k[x_1,\dots, x_n]$ 
by a two-sided ideal $J_{\lambda}$ generated by the following equations:
\[
x_j - \sum_{i \in \lambda} a_{i,j}^{\lambda}x_i
\]
for all $j \not\in \lambda$.
\end{Defn}

The relations say that there are commutative pairs $(a_{i,j_1},a_{i,j_2})$ and commutative quartets 
$(a_{i_1,j_1},a_{i_2,j_2},a_{i_1,j_2},a_{i_2,j_1})$.
The set of closed points, i.e., the maximal two-sided ideals, of $R_{\lambda}$ is the same as that of 
$U_{\lambda}$.
Indeed the abelianization $R_{\lambda}^{\text{ab}} = k[\bar a_{i,j}^{\lambda} \mid i \in \lambda, j \not\in \lambda]$
is a polynomial ring, where the $\bar a_{i,j}^{\lambda}$ are the images of the $a_{i,j}^{\lambda}$, 
and the set of closed points is isomorphic to an affine space $\mathbf A^{m(n-m)}$.
The completion of $R_{\lambda}$ at any closed point $p$ given by the two-sided ideal
$(a_{i,j}^{\lambda} - a_{i,j}^0)_{i,j}$ ($a_{i,j}^0 \in k$) is the parameter algebra 
of the semi-universal NC deformation of the sheaf $\mathcal O_{L_p}$ for a linear subspace 
$L_p \subset \mathbf P^{n-1}$ corresponding to the point $p$ (\cite{NCbase}).

\section{$G(2,4)$}

We will treat $NCG(2,4)$ and prove our theorem.
For $\lambda \in \{1,2,3,4\}$ with $\vert \lambda \vert = 2$, we define
$R_{\lambda} = k\langle a^{\lambda}_{i,j} \vert i \not\in \lambda, j \in \lambda \rangle/I_{\lambda}$
as before.
We will define the algebras corresponding to the intersections of open subsets of $G(2,4)$
and investigate the change of variables formulas.

The set of maximal elements $\Lambda^{\max}$ has $6$ elements ($\vert \Lambda^{\max} \vert = 6$), 
and these elements can be put at the vertexes of a regular octahedron, 
where two vertexes are joined by an edge if and only if $\vert \lambda \cap \lambda' \vert = 1$.
For any $\lambda \in \Lambda^{\max}$, there are $4$ elements 
$\lambda' \in \Lambda^{\max}$ such that
$\vert \lambda \cap \lambda' \vert = 1$ and only $1$ such that $= 0$.

By the symmetry, it will be sufficient to consider the algebras and change of variables among 
$\lambda_1 = \{1,2\}$, $\lambda_2 = \{2,3\}$ and $\lambda_3 = \{3,4\}$.
We write $R_{k} = R_{\lambda_k}$. 
We write also $a_{i,j} = a_{i,j}^{\lambda_1}$, $a'_{i,j} = a_{i,j}^{\lambda_2}$, $a''_{i,j} = a_{i,j}^{\lambda_3}$, 
$I = I_{\lambda_1}$, $I' = I_{\lambda_2}$ and $I'' = I_{\lambda_3}$.
Thus $I$, $I'$ and $I''$ are generated by the following relations:
\[
\begin{split}
&a_{1,3}a_{1,4} - a_{1,4}a_{1,3} = 0, \,\,\, a_{2,3}a_{2,4} - a_{2,4}a_{2,3} = 0, \\
&a_{1,3}a_{2,4} - a_{2,4}a_{1,3} + a_{2,3}a_{1,4} - a_{1,4}a_{2,3} = 0, \\
&a'_{2,4}a'_{2,1} - a'_{2,1}a'_{2,4} = 0, \,\,\, a'_{3,4}a'_{3,1} - a'_{3,1}a'_{3,4} = 0, \\
&a'_{2,4}a'_{3,1} - a'_{3,1}a'_{2,4} + a'_{3,4}a'_{2,1} - a'_{2,1}a'_{3,4} = 0, \\
&a''_{3,1}a''_{3,2} - a''_{3,2}a''_{3,1} = 0, \,\,\, a''_{4,1}a''_{4,2} - a''_{4,2}a''_{4,1} = 0, \\
&a''_{3,1}a''_{4,2} - a''_{4,2}a''_{3,1} + a''_{4,1}a''_{3,2} - a''_{3,2}a''_{4,1} = 0.
\end{split}
\]

\begin{Defn}\label{R12}
Define
\[
R_{1,2} = k\langle a_{i,j}, a'_{i',j'}, a_{1,3}^{-1}, (a'_{3,1})^{-1} \vert i = 1,2,\, j = 3,4, \, i' = 2,3, \, j' = 4,1 \rangle/I_{1,2}
\]
where $I_{1,2}$ is a two-sided ideal generated by the following relations:
\[
\begin{split}
&(1) \,\,\, I, \,\, I', \,\, a_{1,3}a_{1,3}^{-1} = a_{1,3}^{-1}a_{1,3} = a'_{3,1}(a'_{3,1})^{-1} = (a'_{3,1})^{-1}a'_{3,1} = 1, \\
&(2) \,\,\, a'_{2,4} = a_{2,4} - a_{1,3}^{-1}a_{1,4}a_{2,3}, \,\, a'_{2,1} = - a_{1,3}^{-1}a_{2,3}, \,\,
a'_{3,4} = a_{1,3}^{-1}a_{1,4},\,\, a'_{3,1} = a_{1,3}^{-1}, \\
&(3) \,\,\, a_{1,3} = (a'_{3,1})^{-1}, \,\, a_{1,4} = (a'_{3,1})^{-1}a'_{3,4}, \,\,
a_{2,3} = - (a'_{3,1})^{-1}a'_{2,1}, \\
&a_{2,4} = a'_{2,4} - (a'_{3,1})^{-1}a'_{3,4}a'_{2,1},
\end{split}
\]
with natural homomorphisms $\phi_{1,2;1}: R_1 \to R_{1,2}$ and $\phi_{1,2;2}: R_2 \to R_{1,2}$.

The algebras $R_{\lambda, \lambda'}$ for $\lambda, \lambda' \in \Lambda^{\max}$ 
such that $\vert \lambda \cap \lambda' \vert = 1$ are defined similarly
with natural homomorphisms
$\phi_{\lambda, \lambda';\lambda}: R_{\lambda} \to R_{\lambda, \lambda'}$ and 
$\phi_{\lambda, \lambda';\lambda'}: R_{\lambda'} \to R_{\lambda, \lambda'}$. 
\end{Defn}

\begin{Prop}
\[
\begin{split}
&R_{1,2} = k\langle a_{i,j}, a_{1,3}^{-1} \vert i = 1,2,\, j = 3,4 \rangle/(I, \, a_{1,3}a_{1,3}^{-1} - 1,\, a_{1,3}^{-1}a_{1,3} - 1) \\
&= k\langle a'_{i,j}, (a'_{3,1})^{-1} \vert i' = 2,3, \, j' = 4,1 \rangle/(I', \, a'_{3,1}(a'_{3,1})^{-1} - 1,\, (a'_{3,1})^{-1}a'_{3,1} - 1).
\end{split}
\]
\end{Prop}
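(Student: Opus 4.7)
The plan is to prove the first equality; the second is symmetric under swapping the roles of the unprimed and primed generators together with the roles of $(1)$, $(2)$, $(3)$ in Definition \ref{R12}. Write
\[
R := k\langle a_{i,j}, a_{1,3}^{-1} \mid i=1,2,\ j=3,4 \rangle / (I,\, a_{1,3}a_{1,3}^{-1}-1,\, a_{1,3}^{-1}a_{1,3}-1).
\]
I will construct mutually inverse homomorphisms $\psi:R \to R_{1,2}$ and $\psi^{*}:R_{1,2} \to R$.

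The map $\psi$ is the obvious one: send each generator of $R$ to the element with the same name in $R_{1,2}$. It is well-defined because every relation of $R$ is among the relations of type $(1)$ in $I_{1,2}$. The map $\psi^{*}$ is the interesting direction: send $a_{i,j},a_{1,3}^{-1}$ to themselves, send $(a'_{3,1})^{-1}$ to $a_{1,3}$, and send the primed generators $a'_{3,1}$, $a'_{3,4}$, $a'_{2,1}$, $a'_{2,4}$ to the respective right-hand sides of the equations in $(2)$. Once $\psi^{*}$ is shown to be well-defined, the composition $\psi^{*}\!\circ\!\psi$ is manifestly the identity on $R$, while $\psi\!\circ\!\psi^{*}$ fixes the $a_{i,j}$ and $a_{1,3}^{-1}$ on the nose and reproduces each primed generator by relation $(2)$; hence both compositions are the identity.

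The work therefore reduces to checking that every relation in $I_{1,2}$ maps to zero under $\psi^{*}$. The relations in $(1)$ coming from $I$ and from invertibility of $a_{1,3}$ are immediate; the relation $a'_{3,1}(a'_{3,1})^{-1}=(a'_{3,1})^{-1}a'_{3,1}=1$ becomes $a_{1,3}^{-1}a_{1,3}=a_{1,3}a_{1,3}^{-1}=1$. The relations in $(2)$ become tautologies. The four relations in $(3)$ need checking: three are immediate, and the formula $a_{2,4}=a'_{2,4}-(a'_{3,1})^{-1}a'_{3,4}a'_{2,1}$ becomes, after substitution, the identity
\[
a_{2,4} \;=\; a_{2,4} - a_{1,3}^{-1}a_{1,4}a_{2,3} + a_{1,4}a_{1,3}^{-1}a_{2,3},
\]
which holds in $R$ because $a_{1,3}a_{1,4}=a_{1,4}a_{1,3}$ implies $a_{1,3}^{-1}a_{1,4}=a_{1,4}a_{1,3}^{-1}$.

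The bulk of the verification, and the main obstacle, is that the three commutator relations of $I'$ hold in $R$ after the substitution $(2)$. The two ``diagonal'' relations $a'_{2,4}a'_{2,1}-a'_{2,1}a'_{2,4}$ and $a'_{3,4}a'_{3,1}-a'_{3,1}a'_{3,4}$ expand, after cancellation, into linear combinations of the known identities $a_{1,3}a_{1,4}=a_{1,4}a_{1,3}$ and $a_{2,3}a_{2,4}=a_{2,4}a_{2,3}$, together with the derived commutations of $a_{1,3}^{-1}$ with $a_{1,4}$ and with $a_{1,3}^{-1}a_{2,3}$ that come from conjugating the quartet relation by $a_{1,3}^{-1}$. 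The ``quartet'' relation $a'_{2,4}a'_{3,1}-a'_{3,1}a'_{2,4}+a'_{3,4}a'_{2,1}-a'_{2,1}a'_{3,4}$ is the delicate one: one expands all four terms, uses the quartet relation of $I$ in the form $a_{1,3}^{-1}(a_{1,3}a_{2,4}-a_{2,4}a_{1,3})a_{1,3}^{-1}=a_{1,3}^{-1}(a_{1,4}a_{2,3}-a_{2,3}a_{1,4})a_{1,3}^{-1}$ together with $[a_{1,3}^{-1},a_{1,4}]=0$ to collapse all terms. Carrying out this expansion is routine but unpleasant, and is where most of the computation sits; everything else is formal.
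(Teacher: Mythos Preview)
Your approach is essentially the same as the paper's: reduce the first equality to checking that the relations $(3)$ and $I'$ hold in $R$ once the primed variables are defined by $(2)$, and invoke symmetry for the second equality. The paper simply writes out the three $I'$ computations explicitly; your sketch of which unprimed relations feed into which primed ones is slightly garbled (for instance, $[a'_{2,4},a'_{2,1}]=0$ uses the full quartet relation directly, as $a_{1,3}^{-1}(-a_{1,3}a_{2,4}+a_{1,4}a_{2,3}+a_{2,4}a_{1,3}-a_{2,3}a_{1,4})a_{1,3}^{-1}a_{2,3}$, not via any ``commutation of $a_{1,3}^{-1}$ with $a_{1,3}^{-1}a_{2,3}$'', which does not hold), but the computations do go through exactly as you anticipate.
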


\begin{proof}
For the first equality, it is sufficient to prove that, when we define the $a'_{i,j}$ by the relations (2), 
then the relations (3) and $I'$ follow.

(3) is a consequence of the equality $a'_{3,4}(a'_{3,1})^{-1} = (a'_{3,1})^{-1}a'_{3,4}$, which follows from $I'$.
We check that $I'$ follows from $I$:
\[
\begin{split}
&a'_{2,4}a'_{2,1} - a'_{2,1}a'_{2,4} \\
&= - (a_{2,4} - a_{1,3}^{-1}a_{1,4}a_{2,3})a_{1,3}^{-1}a_{2,3} + a_{1,3}^{-1}a_{2,3} (a_{2,4} - a_{1,3}^{-1}a_{1,4}a_{2,3}) \\
&= a_{1,3}^{-1}(- a_{1,3}a_{2,4} + a_{1,4}a_{2,3} + a_{2,4}a_{1,3} - a_{2,3}a_{1,4}) a_{1,3}^{-1}a_{2,3} = 0, \\
&a'_{3,4}a'_{3,1} - a'_{3,1}a'_{3,4} \\
&= a_{1,3}^{-1}a_{1,4}a_{1,3}^{-1} - a_{1,3}^{-1}a_{1,4}a_{1,3}^{-1}a_{1,4} = 0, \\
&a'_{2,4}a'_{3,1} - a'_{3,1}a'_{2,4} - a'_{2,1}a'_{3,4} + a'_{3,4}a'_{2,1} \\
&= (a_{2,4} - a_{1,3}^{-1}a_{1,4}a_{2,3})a_{1,3}^{-1} - a_{1,3}^{-1}(a_{2,4} - a_{1,3}^{-1}a_{1,4}a_{2,3}) \\
&+ a_{1,3}^{-1}a_{2,3}a_{1,3}^{-1}a_{1,4} - a_{1,3}^{-1}a_{1,4}a_{1,3}^{-1}a_{2,3} \\
&= a_{1,3}^{-1}(a_{1,3}a_{2,4} - a_{1,4}a_{2,3} - a_{2,4}a_{1,3} + a_{2,3}a_{1,4})a_{1,3}^{-1} = 0.
\end{split}
\]
The second equality is proved similarly.
\end{proof}

\begin{Cor}\label{Cor1}
(1) The set of closed points of $R_{1,2}$, i.e., the set of $k$-algebra homomorphisms $R_{1,2} \to k$, 
is an open subset of an affine space $\mathbf A^4$ with coordinates $\bar a_{i,j}$ for 
$i = 1,2$ and $j = 3,4$ defined by an equation $\bar a_{1,3} \ne 0$.

(2) The completion of $R_{1,2}$ at a closed point defined by a two-sided ideal $(a_{i,j} - a^0_{i,j})$ with $a^0_{i,j} \in k$
such that $a^0_{1,3} \ne 0$ is naturally isomorphic to the completion of $R_1$ at the same point.
\end{Cor}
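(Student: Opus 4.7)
For part (1), a $k$-algebra homomorphism $R_{1,2} \to k$ must kill all commutators (since $k$ is commutative), so it factors through the abelianization $R_{1,2}^{\text{ab}}$. Using the first presentation given in the Proposition, the quadratic relations in $I$ become trivial after abelianizing, so $R_{1,2}^{\text{ab}}$ is the localization $k[\bar a_{i,j}][\bar a_{1,3}^{-1}]$ of a polynomial ring in four variables. Its $k$-points are precisely the points of $\mathbf{A}^4$ with $\bar a_{1,3} \ne 0$.

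For part (2), the plan is to construct a pair of mutually inverse maps between $\hat R_{1,2}$ and $\hat R_1$ at the corresponding maximal ideals. In one direction, the structure map $\phi_{1,2;1}: R_1 \to R_{1,2}$ sends the maximal ideal $\mathfrak{m} = (a_{i,j} - a^0_{i,j})$ of $R_1$ into the maximal ideal $\mathfrak{m}'$ of $R_{1,2}$ associated to the same closed point, hence induces a continuous map $\hat\phi: \hat R_1 \to \hat R_{1,2}$. For the reverse direction, observe that in $\hat R_1$ the element $a_{1,3}$ is congruent to $a^0_{1,3} \ne 0$ modulo $\hat{\mathfrak{m}}$. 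Writing $a_{1,3} = a^0_{1,3}(1+u)$ with $u \in \hat{\mathfrak{m}}$, the geometric series $\sum_{n\ge 0}(-u)^n$ converges in the $\mathfrak{m}$-adic topology and provides a two-sided inverse to $a_{1,3}$ in $\hat R_1$. By the universal property of adjoining a two-sided inverse (the defining relation of $R_{1,2}$ over $R_1$), the composition $R_1 \to \hat R_1$ extends uniquely to an algebra map $\psi: R_{1,2} \to \hat R_1$.

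To pass to completions I must check that $\psi$ is continuous, i.e.\ that it carries $\mathfrak{m}'$ into $\hat{\mathfrak{m}}$. The generators $a_{i,j}-a^0_{i,j}$ are obviously sent into $\hat{\mathfrak{m}}$, while $a_{1,3}^{-1} - (a^0_{1,3})^{-1}$ is mapped to $(a^0_{1,3})^{-1}\bigl(\sum_{n\ge 1}(-u)^n\bigr)$, which lies in $\hat{\mathfrak{m}}$ by construction. Thus $\psi$ extends to $\hat\psi: \hat R_{1,2} \to \hat R_1$. The two compositions $\hat\psi\circ\hat\phi$ and $\hat\phi\circ\hat\psi$ agree with the identity on the dense subalgebras $R_1$ and $R_{1,2}$ respectively (sending $a_{i,j}$ to $a_{i,j}$ and $a_{1,3}^{-1}$ to $a_{1,3}^{-1}$ by uniqueness of two-sided inverses), so by continuity they are the identity maps.

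The main obstacle is purely the verification of continuity for $\psi$, which amounts to showing that the new generator $a_{1,3}^{-1}$ has the expected image modulo $\hat{\mathfrak{m}}$; this is handled by the geometric series argument above. Everything else reduces to the universal property of adjoining a two-sided inverse, which applies cleanly because the definition of $R_{1,2}$ imposes no further relations beyond the inverse relations and the relations of $R_1$.
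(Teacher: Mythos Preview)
Your proof is correct and follows the same approach as the paper: part (1) is the abelianization computation, and part (2) rests on the single observation that $a_{1,3}$ becomes invertible in the completion $\hat R_1$ when $a^0_{1,3}\neq 0$. The paper records this in one line (``because $a_{1,3}$ is invertible at these points''), while you spell out the geometric-series construction of the inverse, the universal property of adjoining a two-sided inverse, and the continuity check; all of this is accurate and simply makes explicit what the paper leaves implicit.
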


\begin{proof}
(1) The abelianization of $R_{1,2}$ is a localization of a polynomial ring:
\[
R_{1,2}^{\text{ab}} = k[\bar a_{i,j}, \bar a_{1,3}^{-1} \vert i = 1,2,\,\,\, j = 3,4]
= k[\bar a'_{i',j'}, (\bar a'_{3,1})^{-1} \vert i' = 2,3,\,\,\, j' = 4,1]
\]
where the $\bar a_{i,j}$ and $\bar a'_{i',j'}$ are the images of the $a_{i,j}$ and $a'_{i',j'}$.

(2) This is a because $a_{1,3}$ is invertible at these points. 
\end{proof}

In order to define the change of variable formula from $R_1$ to $R_3$, we calculate the composition of the formulas
from $R_1$ to $R_2$ and from $R_2$ to $R_3$:

\begin{Lem}\label{R123}
Let 
\[
\begin{split}
&R_{1,2,3} = k\langle a_{i,j}, a'_{i',j'}, a''_{i'',j''}, a_{1,3}^{-1}, (a'_{2,4})^{-1} \vert i,j'' = 1,2,\, j,i = 3,4, \,\\
&i' = 2,3, \, j' = 4,1 \rangle/I_{1,2,3}
\end{split}
\]
where $I_{1,2,3}$ is a two-sided ideal generated by $I$ and equalities
$a_{1,3}a_{1,3}^{-1} = a_{1,3}^{-1}a_{1,3} = a'_{2,4}(a'_{2,4})^{-1} = (a'_{2,4})^{-1}a_{2,4} = 1$
as well as the relations
\[
\begin{split}
&a'_{2,4} = a_{2,4} - a_{1,3}^{-1}a_{1,4}a_{2,3}, \,\, a'_{2,1} = - a_{1,3}^{-1}a_{2,3}, \,\, a'_{3,4} = a_{1,3}^{-1}a_{1,4},\,\, 
a'_{3,1} = a_{1,3}^{-1}, \\
&a''_{3,1} = a'_{3,1} - (a'_{2,4})^{-1}a'_{2,1}a'_{3,4}, \,\, a''_{3,2} = - (a'_{2,4})^{-1}a'_{3,4}, \,\,a''_{4,1} = (a'_{2,4})^{-1}a'_{2,1}, \\
&a''_{4,2} = (a'_{2,4})^{-1}.
\end{split}
\]
Let $d = a_{1,3}a_{2,4} - a_{1,4}a_{2,3}$ and $d'' = a''_{3,1}a''_{4,2} - a''_{3,2}a''_{4,1}$.
Then the following hold.

(1) $dd'' = d''d = 1$.

(2) 
\[
\begin{split}
&a''_{3,1} = d^{-1}a_{2,4}, \,\,\, a''_{3,2} = - d^{-1}a_{1,4}, \,\,\, a''_{4,1} = - d^{-1}a_{2,3},\,\,\, a''_{4,2} = d^{-1}a_{1,3}, \\
&a_{1,3} = (d'')^{-1}a''_{4,2}, \,\,\, a_{1,4} = - (d'')^{-1}a''_{3,2}, \,\,\, a_{2,3} = - (d'')^{-1}a''_{4,1},\,\,\, a_{2,4} = (d'')^{-1}a''_{3,1}.
\end{split}
\]
\end{Lem}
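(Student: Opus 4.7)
The plan is to organize everything around the factorization $d = a_{1,3}\,a'_{2,4}$ in $R_{1,2,3}$, which is immediate from the defining substitution $a'_{2,4} = a_{2,4} - a_{1,3}^{-1} a_{1,4} a_{2,3}$ upon multiplying on the left by $a_{1,3}$. This exhibits $d$ as a product of two elements both already invertible in $R_{1,2,3}$, so $d^{-1} = (a'_{2,4})^{-1} a_{1,3}^{-1}$ is automatic, and this factorization will serve as the bridge between the $a$, $a'$ and $a''$ variables throughout.

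For part (2), I would substitute the primed variables into the defining formulas $a''_{3,1} = a'_{3,1} - (a'_{2,4})^{-1} a'_{2,1} a'_{3,4}$, $a''_{3,2} = -(a'_{2,4})^{-1} a'_{3,4}$, $a''_{4,1} = (a'_{2,4})^{-1} a'_{2,1}$, $a''_{4,2} = (a'_{2,4})^{-1}$, using $(a'_{2,4})^{-1} = d^{-1}\,a_{1,3}$ together with the commutativity of $a_{1,3}$ and $a_{1,4}$. Three of the four target identities fall out by direct cancellation. The formula for $a''_{3,1}$ is the only nontrivial case: it reduces to $a_{1,3}^{-1} + d^{-1} a_{2,3} a_{1,4} a_{1,3}^{-1}$, and matching this with $d^{-1} a_{2,4}$ collapses, after clearing denominators, to precisely the quartet relation $a_{1,3}a_{2,4} - a_{2,4}a_{1,3} = a_{1,4}a_{2,3} - a_{2,3}a_{1,4}$ in $I$. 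The second set of identities in (2), expressing the $a_{i,j}$ in terms of the $a''_{i,j}$ and $(d'')^{-1}$, then follows formally by solving the first set once part (1) is in hand.

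For part (1), rather than expanding $d\cdot d''$ directly---which would force us to analyse conjugates such as $d^{-1} a_{1,3} d$---I would prove the sharper identity $d'' = (a'_{2,4})^{-1} a_{1,3}^{-1}$, from which $dd'' = d''d = 1$ telescopes at once via $d = a_{1,3} a'_{2,4}$. Equivalently, it suffices to show $a'_{2,4}\cdot d'' = a'_{3,1}$. Expanding $d'' = a''_{3,1}a''_{4,2} - a''_{3,2}a''_{4,1}$ in terms of the primed variables yields three terms; the commutativity of $a'_{2,1}$ with $a'_{2,4}$ (first relation of $I'$) upgrades to commutativity with $(a'_{2,4})^{-1}$, and the ``quartet'' relation of $I'$ allows one to commute $a'_{3,1}$ past $a'_{2,4}$. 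After these two substitutions the two mixed terms involving $a'_{2,1}a'_{3,4}(a'_{2,4})^{-1}$ cancel and the remaining two terms also cancel, leaving exactly $a'_{3,1} = a_{1,3}^{-1}$.

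The main obstacle is purely combinatorial bookkeeping: the commutative shadow of the whole lemma is simply the fact that the inverse of a $2\times 2$ matrix equals its adjugate over the determinant, so all the genuine content lies in verifying that the noncommutative representatives chosen in the definitions multiply in exactly the right order. Routing the change of variables through the intermediate chart $R_2$, as above, localises each step to a single commutator relation of $I$ or $I'$; a direct attack in $R_1$ alone would instead require understanding the inner automorphism $x \mapsto d^{-1} x d$, which is much less transparent.
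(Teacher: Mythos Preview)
Your proposal is correct and follows essentially the same route as the paper: both arguments pass through the primed chart, use the commutativity $[a'_{3,4},a'_{3,1}]=[a'_{2,4},a'_{2,1}]=0$ together with the quartet relation in $I'$ to handle~$d''$, and invoke the quartet relation in $I$ for the one nontrivial identity $a''_{3,1}=d^{-1}a_{2,4}$. Your explicit isolation of the factorization $d=a_{1,3}\,a'_{2,4}$ (and the companion $d''=(a'_{2,4})^{-1}a_{1,3}^{-1}$) is a nice organizing principle that the paper leaves implicit inside its direct expansion of~$dd''$, but the underlying computation is the same.
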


\begin{proof}
(1) 
\[
\begin{split}
&dd'' = ((a'_{3,1})^{-1}(a'_{2,4} - (a'_{3,1})^{-1}a'_{3,4}a'_{2,1}) + (a'_{3,1})^{-1}a'_{3,4}(a'_{3,1})^{-1}a'_{2,1}) \\
&((a'_{3,1} - (a'_{2,4})^{-1}a'_{2,1}a'_{3,4})(a'_{2,4})^{-1} + (a'_{2,4})^{-1}a'_{3,4}(a'_{2,4})^{-1}a'_{2,1}), \\
&= (a'_{3,1})^{-1}(a'_{2,4}a'_{3,1} - a'_{2,1}a'_{3,4} + a'_{3,4}a'_{2,1})(a'_{2,4})^{-1} \\
&= (a'_{3,1})^{-1}(a'_{3,1}a'_{2,4} - a'_{3,4}a'_{2,1} + a'_{3,4}a'_{2,1})(a'_{2,4})^{-1} = 1.
\end{split}
\]
$d''d = 1$ is similarly proved.

(2)
\[
\begin{split}
&a''_{3,1} = a_{1,3}^{-1} + (a_{2,4} - a_{1,3}^{-1}a_{1,4}a_{2,3})^{-1}a_{1,3}^{-1}a_{2,3}a_{1,3}^{-1}a_{1,4} \\
&= (1 + (a_{1,3}a_{2,4} - a_{1,4}a_{2,3})^{-1}a_{2,3}a_{1,4})a_{1,3}^{-1} \\
&= (a_{1,3}a_{2,4} - a_{1,4}a_{2,3})^{-1}(a_{1,3}a_{2,4} - a_{1,4}a_{2,3} + a_{2,3}a_{1,4})a_{1,3}^{-1} \\
&= (a_{1,3}a_{2,4} - a_{1,4}a_{2,3})^{-1}a_{2,4}, \\
&a''_{3,2} = - (a_{2,4} - a_{1,3}^{-1}a_{1,4}a_{2,3})^{-1}a_{1,3}^{-1}a_{1,4} \\
&= - (a_{1,3}a_{2,4} - a_{1,4}a_{2,3})^{-1}a_{1,4}, \\
&a''_{4,1} = - (a_{2,4} - a_{1,3}^{-1}a_{1,4}a_{2,3})^{-1}a_{1,3}^{-1}a_{2,3} \\
&= (a_{1,3}a_{2,4} - a_{1,4}a_{2,3})^{-1}a_{2,3}, \\
&a''_{4,2} = (a_{1,3}a_{2,4} - a_{1,4}a_{2,3})^{-1}a_{1,3}.
\end{split}
\]
\end{proof}

\begin{Defn}
Let $d = a_{1,3}a_{2,4} - a_{1,4}a_{2,3} \in R_1$ and $d'' = a''_{3,1}a''_{4,2} - a''_{3,2}a''_{4,1} \in R_3$.
Then define 
\[
R_{1,3} = k\langle a_{i,j}, a''_{j,i}, d^{-1}, (d'')^{-1} \vert i = 1,2,\,\,\, j = 3,4 \rangle/I_{1,3}
\]
where $I_{1,3}$ is a two-sided ideal generated by the following relations:
\[
\begin{split}
&(1) \,\,\, I, \,\, I'', \,\, dd^{-1} = d^{-1}d = d''(d'')^{-1} = (d'')^{-1}d'' = 1, \\
&(2) \,\,\, a''_{3,1} = d^{-1}a_{2,4}, \,\, a''_{3,2} = - d^{-1}a_{1,4}, \,\, a''_{4,1} = - d^{-1}a_{2,3},\,\, a''_{4,2} = d^{-1}a_{1,3}, \\
&(3) \,\,\, a_{1,3} = (d'')^{-1}a''_{4,2}, \,\, a_{1,4} = - (d'')^{-1}a''_{3,2}, \,\, a_{2,3} = - (d'')^{-1}a''_{4,1},\,\,\\
&a_{2,4} = (d'')^{-1}a''_{3,1}, 
\end{split}
\]
with natural homomorphisms $\phi_{1,3;1}: R_1 \to R_{1,3}$ and $\phi_{1,3;3}: R_3 \to R_{1,3}$.

The algebras $R_{\lambda, \lambda'}$ for $\lambda, \lambda' \in \Lambda^{\max}$ 
such that $\vert \lambda \cap \lambda' \vert = 0$ are defined similarly
with natural homomorphisms
$\phi_{\lambda, \lambda';\lambda}: R_{\lambda} \to R_{\lambda, \lambda'}$ and 
$\phi_{\lambda, \lambda';\lambda'}: R_{\lambda'} \to R_{\lambda, \lambda'}$. 
\end{Defn}

\begin{Cor}
(1) The set of closed points of $R_{1,3}$
is an open subset of an affine space $\mathbf A^4$ with coordinates $\bar a_{i,j}$ for 
$i = 1,2$ and $j = 3,4$ defined by an equation $\bar a_{1,3}\bar a_{2,4} - \bar a_{1,4}\bar a_{2,3} \ne 0$.

(2) The completion of $R_{1,3}$ at a closed point defined by a two-sided ideal $(a_{i,j} - a^0_{i,j})$ with $a^0_{i,j} \in k$
such that $a^0_{1,3}a^0_{2,4} - a^0_{1,4}a^0_{2,3} \ne 0$ is naturally isomorphic to the completion of 
$R_1$ at the same point.
\end{Cor}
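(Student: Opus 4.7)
The plan is to follow the strategy of Corollary~\ref{Cor1}, with $d$ playing the role that $a_{1,3}$ played in the $R_1\to R_2$ change of variables. The key step is to establish an analogue of the Proposition preceding Corollary~\ref{Cor1}: namely, that
\[
R_{1,3}\ \cong\ k\langle a_{i,j},\, d^{-1}\mid i=1,2,\ j=3,4\rangle\big/\bigl(I,\ dd^{-1}-1,\ d^{-1}d-1\bigr),
\]
and symmetrically in the $a''_{j,i}$. Granting this, both parts of the corollary fall out: the abelianization is the commutative localization $k[\bar a_{i,j}][\bar d^{\,-1}]$, whose maximal spectrum is exactly the affine open $\{\bar d\ne 0\}\subset\mathbf A^{4}$, yielding (1); and at a closed point $p$ with $a^{0}_{1,3}a^{0}_{2,4}-a^{0}_{1,4}a^{0}_{2,3}\ne 0$, the element $d$ is a unit modulo the defining maximal two-sided ideal and hence becomes invertible in $\widehat{R_{1}}$, so the natural map $R_{1}\to R_{1,3}$ extends to an isomorphism $\widehat{R_{1}}\xrightarrow{\sim}\widehat{R_{1,3}}$, yielding (2).

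To prove the simplified presentation, I would work inside the candidate right-hand side $R_{1}[d^{-1}]$, define $a''_{j,i}$ by the formulas in relation~(2) of the definition of $R_{1,3}$, and then verify the remaining defining data of $R_{1,3}$: the equality $(d'')^{-1}=d$, the inverse relations~(3), and the commutativity relations $I''$ satisfied by the $a''_{j,i}$. The identification $(d'')^{-1}=d$ is exactly Lemma~\ref{R123}(1), and once it is available the relations~(3) are immediate from~(2) by substitution. The substantive content is that $I''$ follows from $I$ after the substitution
\[
a''_{3,1}=d^{-1}a_{2,4},\ \ a''_{3,2}=-d^{-1}a_{1,4},\ \ a''_{4,1}=-d^{-1}a_{2,3},\ \ a''_{4,2}=d^{-1}a_{1,3}.
\]

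There are two natural ways to carry out this last check. The direct route is to plug the above expressions into each generator of $I''$ and reduce modulo~$I$, in the same spirit as the explicit calculation in the proof of the Proposition preceding Corollary~\ref{Cor1}; this is the step I expect to be the main obstacle, since $d^{-1}$ does not a priori commute with the $a_{i,j}$ and the quartet relation of $I$ has to be used to move factors of $d^{\pm 1}$ past the generators. The cleaner route, which I would prefer, is to factor through $R_{1,2,3}$: two successive applications of the Proposition yield $I''$ from $I$ via the intermediate step $I'$, while Lemma~\ref{R123}(2) identifies the composite $a''_{j,i}$ realised inside $R_{1,2,3}$ with the direct expressions used in the definition of $R_{1,3}$, so the relations $I''$ are forced with no further commutator computation needed.
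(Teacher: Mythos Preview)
Your plan diverges from the paper's. The paper does \emph{not} prove an analogue of the Proposition for $R_{1,3}$; it never claims a simplified presentation $R_{1,3}\cong k\langle a_{i,j},d^{-1}\rangle/(I,dd^{-1}-1,d^{-1}d-1)$. For part~(1) it simply computes the abelianization directly from the defining presentation of $R_{1,3}$: once everything commutes, $I$ and $I''$ become vacuous and relations (2), (3) are mutually inverse, so $R_{1,3}^{\mathrm{ab}}=k[\bar a_{i,j},\bar d^{\,-1}]$ with no further work. For part~(2) the paper argues by cases according to which entry $a^0_{i,j}$ is nonzero: when $a^0_{1,3}\ne 0$ it compares completions via the intermediate algebra $R_{1,2,3}$ of Lemma~\ref{R123}, and for the other three cases it replaces $\lambda_2=\{2,3\}$ by $\{2,4\}$, $\{1,3\}$, or $\{1,4\}$. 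Thus the paper never needs the global statement you are aiming for.

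Your ``cleaner route'' to that simplified presentation has a genuine gap. The two successive applications of the Proposition, and the identities of Lemma~\ref{R123}, all take place in $R_{1,2,3}$, where $a_{1,3}$ (not merely $d$) has been inverted; every line of those computations uses $a_{1,3}^{-1}$ explicitly. They establish that $I''$ holds for the elements $d^{-1}a_{2,4},\,-d^{-1}a_{1,4},\,\dots$ \emph{inside $R_{1,2,3}$}. To conclude the same in $R_1[d^{-1}]$ you would need the natural map $R_1[d^{-1}]\to R_{1,2,3}$ to be injective, and nothing you have said gives that; noncommutative ``localizations'' defined by generators and relations need not embed into their further localizations. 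So the only honest way to get your simplified presentation is the ``direct route'' you flagged as the main obstacle---substituting (2) into each generator of $I''$ and reducing using only $I$ and the invertibility of $d$---and that computation is precisely what you have not done. The paper's case analysis is designed to avoid it.
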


\begin{proof}
(1) The abelianization of $R_{1,3}$ is a localization of a polynomial ring:
\[
R_{1,3}^{\text{ab}} = k[\bar a_{i,j}, \bar d^{-1} \vert i = 1,2,\,\,\, j = 3,4]
= k[\bar a''_{i'',j''}, (\bar d'')^{-1} \vert i' = 3,4,\,\,\, j' = 1,2]
\]
where $\bar a_{i,j}, \bar d, \bar a''_{i'',j''}, \bar d''$ are the images of the $a_{i,j}, d, a''_{i'',j''}, d''$ respectively.

(2) If $a^0_{1,3} \ne 0$, then by Lemma \ref{R123}, the completion of $R_{1,3}$ is naturally isomorphic 
to the completion of $R_{1,2,3}$ at the same point, hence the assertion.
If $a^0_{1,4} \ne 0$, we take $\{2,4\}$ for $\lambda_2$ instead of $\{2,3\}$.
Then the assertion follows.
In the cases where $a^0_{2,3} \ne 0$ or $a^0_{2,4} \ne 0$ are treated similarly.
\end{proof}

Now we consider the intersections of $3$ or more basic open subsets.
In the case of $3$ open subsets, we have two subcases by symmetry:

(1) $\lambda_1 = \{1,2\}$, $\lambda_2 = \{2,3\}$ and $\lambda_3 = \{3,4\}$.

(2) $\lambda_1 = \{1,2\}$, $\lambda_2 = \{2,3\}$ and $\lambda_4 = \{1,3\}$.
 
In the first case, we take $R_{1,2,3}$ as the corresponding algebra which was considered already.
The abelianization is
\[
R_{1,2,3}^{\text{ab}} = k[\bar a_{i,j}, \,\bar a_{1,3}^{-1},\, \bar d^{-1} \, \vert \,\, i = 1,2,\,\, j = 3,4].
\]

In the second case, the $3$ vertexes of the octahedron span a regular triangle.
We write $R_4 = R_{\lambda_4}$, $a'''_{i,j} = a_{i,j}^{\lambda_4}$ and $I''' = I_{\lambda_4}$.
Thus $I'''$ is generated by 
\[
\begin{split}
&a'''_{12}a'''_{1,4} - a'''_{1,4}a'''_{1,2} = 0, \,\,\, a'''_{3,2}a'''_{3,4} - a'''_{3,4}a'''_{3,2} = 0, \\
&a'''_{1,2}a'''_{3,4} - a'''_{1,4}a'''_{3,2} + a'''_{3,2}a'''_{1,4} - a'''_{3,4}a'''_{1,2} = 0.
\end{split}
\]
We define
\[
\begin{split}
&R_{1,2,4} = k\langle a_{i,j}, a'_{i',j'}, a'''_{i'',j''}, a_{1,3}^{-1}, a_{2,3}^{-1} \, \vert\,\, i = 1,2,\, j = 3,4, \, \\
&i' = 2,3, \, j' = 4,1, \, i''' = 1,3, \, j''' = 2,4 \rangle/I_{1,2,4}
\end{split}
\]
where $I_{1,2,4}$ is a two-sided ideal generated by $I$ and equalities
$a_{1,3}a_{1,3}^{-1} = a_{1,3}^{-1}a_{1,3} = a_{2,3}a_{2,3}^{-1} = a_{2,3}^{-1}a_{2,3} = 1$
as well as the relations
\[
\begin{split}
&a'_{2,4} = a_{2,4} - a_{1,3}^{-1}a_{1,4}a_{2,3}, \,\, a'_{2,1} = - a_{1,3}^{-1}a_{2,3}, \,\, a'_{3,4} = a_{1,3}^{-1}a_{1,4},\,\, 
a'_{3,1} = a_{1,3}^{-1}, \\
&a'''_{1,4} = a_{1,4} - a_{2,3}^{-1}a_{2,4}a_{1,3}, \,\, a'''_{1,2} = - a_{2,3}^{-1}a_{1,3}, \,\,a'''_{3,4} = a_{2,3}^{-1}a_{2,4}, 
\,\, a'''_{3,2} = a_{2,3}^{-1}.
\end{split}
\]
The abelianization is
\[
R_{1,2,4}^{\text{ab}} = k[\bar a_{i,j}, \,\bar a_{1,3}^{-1}, \,\bar a_{2,3}^{-1} \, \vert \,\, i = 1,2,\,\, j = 3,4].
\]
The rest of the proof is the same as before.

The case of the intersections of 4 or more basic open subsets are treated similarly and the calculations follows 
from the cases above. 

The universal bundles glue together naturally along the natural restriction homomorphisms.


Graduate School of Mathematical Sciences, University of Tokyo,
Komaba, Meguro, Tokyo, 153-8914, Japan. 

kawamata@ms.u-tokyo.ac.jp

\end{document}